\newtheorem{thm}{Theorem}[section]
\newcommand{\go}[1]{\mathfrak{#1}}
\def\binom#1#2{{#1}\choose{#2}}
\newcommand{\R}{{\rm I}\kern-0.18em{\rm R}}
\newcommand{\1}{{\rm 1}\kern-0.25em{\rm I}}
\newcommand{\E}{{\rm I}\kern-0.18em{\rm E}}
\newcommand{\p}{{\rm I}\kern-0.18em{\rm P}}
\def\@fnsymbol#1{\ensuremath{\ifcase#1\or a\or b\or c\or d\or \e\or f\or *\dagger 	\or \ddagger\ddagger \else\@ctrerr\fi}}
\title{Characterizations of Two-Points and Other Related Distributions}
\author{L.B. Klebanov\footnote{Department of Probability and Mathematical Statistics, Charles University, Prague, Czech Republic. e-mail: lev.klebanov@mff.cuni.cz}}
\date{}
\begin{document}
\maketitle

\begin{abstract}
We provide new characterizations of two-points and some related distributions. We use properties of independence and/or identity of the  distributions of suitable linear forms of random variables.

{\bf Key words:} characterization of a distribution; two-points distribution; uniform distribution; 
squared hyperbolic secant distribution; independent or identically  distributed linear forms
 
\end{abstract}

\begin{section}{Characterization by the identical distribution property}\label{sec1}
\setcounter{equation}{0} 

In a recent paper \cite{KH} we have given characterizations of the hyperbolic secant distribution based on properties of identical distribution of linear forms with random coefficients and on the property of independence of such forms. These characterization properties are,  
in some sense, `similar' (or analogous) to characterization properties of the Gaussian distribution. Here we 
use the same technique, the method of intensive monotone operators, to provide further characterizations results for two-points distributions 
and for other related distributions. 

\begin{thm}\label{th1} 
Let $X_1, X_2$ be two independent copies of a symmetric random variable $X$. Suppose  $\varepsilon$ is a random variable taking two values, $0$ and $1,$ each with probability  $\tfrac12$ and such that $X_1, X_2, \varepsilon$ are independent.
Then the relation 
\begin{equation}\label{eq1}
\varepsilon X_1 \ \stackrel{d}{=} \ \tfrac12(X_1+X_2) 
\end{equation}
holds if and only if $X$ takes only two values, $-a$ and $a$,  for some $a > 0$, each with probability $\tfrac12.$ 
\end{thm}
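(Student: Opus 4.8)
The plan is to recast \eqref{eq1} as a functional equation for the characteristic function and to analyze it by the method of intensive monotone operators. Write $f(t)=\E e^{itX}$; since $X$ is symmetric, $f$ is real, even, continuous, $f(0)=1$ and $|f|\le 1$. Conditioning on $\varepsilon$ gives $\E e^{it\varepsilon X_1}=\tfrac12\bigl(1+f(t)\bigr)$, while independence of $X_1,X_2$ gives $\E e^{\frac{it}{2}(X_1+X_2)}=f(t/2)^2$. Hence \eqref{eq1} is equivalent to
\begin{equation}\label{eqfe}
\frac{1+f(t)}{2}=f\!\left(\tfrac{t}{2}\right)^{2},\qquad t\in\R.
\end{equation}
For the \emph{if} part I would simply substitute the candidate $f(t)=\cos(at)$, the characteristic function of the symmetric two-point law on $\{-a,a\}$, and verify \eqref{eqfe} by the double-angle identity $\cos(at)=2\cos^2(at/2)-1$.

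For the converse I would exploit the equivalent form $f(t)=2f(t/2)^2-1$. Since $f$ is continuous and $f(0)=1$, there is $\delta>0$ with $f>0$ on $[0,\delta]$, so I can set $\phi(t)=\arccos f(t)\in[0,\pi/2)$, a continuous function with $\phi(0)=0$. On $[0,\delta]$ the equation reads $\cos\phi(t/2)=f(t/2)=\sqrt{(1+\cos\phi(t))/2}=\cos\bigl(\phi(t)/2\bigr)$, where the half-angle formula applies because $\phi(t)\in[0,\pi]$ and $f(t/2)\ge 0$. Since both $\phi(t/2)$ and $\phi(t)/2$ lie in $[0,\pi/2)$, on which $\cos$ is injective, this yields the dyadic self-similarity relation
\begin{equation}\label{eqss}
\phi(t/2)=\tfrac12\,\phi(t),\qquad t\in[0,\delta].
\end{equation}

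The remaining task is to show that \eqref{eqss} forces $\phi(t)=at$ for some $a\ge 0$, and this is exactly where the method of intensive monotone operators enters. I expect this to be the main obstacle, because continuity alone does \emph{not} suffice: \eqref{eqss} admits pathological log-periodic solutions such as $\phi(t)=t\bigl(1+\sin(2\pi\log_2 t)\bigr)$, so linearity must be extracted from the extra fact that $f=\cos\phi$ is a genuine characteristic function. Exploiting the monotonicity and positive-definiteness structure built into the operator $Tf=2f(\cdot/2)^2-1$, in the manner of \cite{KH}, one rules out such oscillatory solutions and concludes $\phi(t)=at$, that is, $f(t)=\cos(at)$ on $[0,\delta]$.

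Finally I would propagate this outward: substituting $f(t)=\cos(at)$ into $f(t)=2f(t/2)^2-1$ extends the identity from $[0,\delta]$ to $[0,2\delta]$, then to $[0,4\delta]$, and by induction to all of $[0,\infty)$, with evenness covering $t<0$. By the uniqueness theorem for characteristic functions, $f(t)=\cos(at)$ identifies $X$ as the symmetric two-point law on $\{-a,a\}$; the degenerate case $a=0$ (i.e. $X\equiv 0$) is discarded once $X$ is assumed non-degenerate, leaving $a>0$ as claimed.
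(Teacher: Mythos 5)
Your reduction to the functional equation $(1+f(t))/2=f^2(t/2)$ (the paper's (\ref{eq2})) and the verification of the cosine solutions are correct and identical to the paper's first steps; the $\arccos$ substitution giving $\phi(t/2)=\tfrac12\phi(t)$ near the origin is also sound. But the proof then stops at precisely the point where the theorem's content lies. You yourself exhibit continuous solutions of the dyadic relation that are not linear (e.g.\ $\phi(t)=t(1+\sin(2\pi\log_2 t))$), and hence continuous, even, locally positive solutions $f=\cos\phi$ of $f(t)=2f^2(t/2)-1$ that are not cosines; so everything hinges on showing that no such $f$ can be a characteristic function. That step is never carried out: ``exploiting the monotonicity and positive-definiteness structure \dots\ one rules out such oscillatory solutions'' names the goal, not an argument. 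You do not identify which property of positive definite functions fails for $\cos\phi$ with nonlinear $\phi$, nor do you state which theorem of the intensive-monotone-operator machinery you intend to apply, with which comparison family, and---the crucial point---over which function class that family is strongly positive. That class cannot be ``continuous and positive near $0$'': your own counterexample shows uniqueness fails there, so the class must encode the characteristic-function property. The paper closes exactly this step by checking the hypotheses of Theorem 1.1.1 of \cite{KKM} for the operator $(\mathbf{A}g)(t)=2g^2(t/2)-1$ together with the strong positivity of the family $\{\cos(at),\ a>0\}$ (Example 1.3.1 of \cite{KKM}). So your proposal reproduces the easy half of the paper's proof and replaces the hard half with a promissory note.

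The gap is genuine but fillable, and positive definiteness in fact enters in a concrete way that would let you bypass the operator machinery entirely: evaluating (\ref{eq2}) at $2t$ gives $(1+f(2t))/2=f^2(t)$, and since ${\bf E}[\cos^2(tX)]=(1+f(2t))/2$, the equation says ${\bf E}[\cos^2(tX)]=\bigl({\bf E}[\cos(tX)]\bigr)^2$, i.e.\ ${\rm Var}\bigl(\cos(tX)\bigr)=0$ for every $t$. Hence for each $t$ the support of $X$ is contained in $\{x:\cos(tx)=f(t)\}$; for any two support points $x,x'$ this forces $\cos(tx)=\cos(tx')$ for all $t$, so $|x|=|x'|$, the support is $\{-a,a\}$, and symmetry gives probability $\tfrac12$ to each point. (The case $a=0$, i.e.\ $X\equiv 0$, also satisfies (\ref{eq1}); ruling it out requires a non-degeneracy assumption, a minor issue in the statement itself.) Either an argument of this kind, or an actual verification of the hypotheses of the cited theorem from \cite{KKM}, is needed before your proof can be considered complete.
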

\begin{proof}
Let $g(t)$ be characteristic function of the random variable $X$. The equation (\ref{eq1}) can be written in the form
\begin{equation}\label{eq2}
\frac{1}{2}\bigl(g(t)+1\bigr) =g^2(t/2).
\end{equation}
It is easy to verify that for any $a \geq 0$ the function $\cos(at)$ is a solution of (\ref{eq2}). We have to prove that there are no other solutions of this equation. For that aim we use the method of intensively monotone operators (see \cite{KKM}). Define now the following operator
\[ (\mathbf{A}g)(t)=2g^2(t/2)-1. \]
Denote by $\mathcal{E}_+$ the set of all non-negative continuous functions in the interval $[0,T]$ for some fixed number $T>0,$ 
 such that $g(t)$ has no zeros in $[-T,T]$. By using the terminology and arguments from \cite{KKM}, 
 it is easy to check that the operator $\mathbf{A}$ is intensively monotone from $\mathcal{E}_+$ to 
 the space of all continuous functions in $[0,T].$

Consider the family $\{ \varphi(a t),\; a>0\}$, where $\varphi (t) = \cos(t)$. According to Example 1.3.1 from \cite{KKM} this family is strongly $\mathcal{E}_+$-positive.
The statement of Theorem \ref{th1} follows now from  Theorem 1.1.1 in \cite{KKM}. 
\end{proof}

Let us give now a more general result.

\begin{thm}\label{th1a}
Let $X$ be a random variable symmetric with respect to the origin $0$ and $n \geq 2$ be a natural number. Consider $n$ independent 
copies of $X$, namely $X_1, X_2, \dots ,X_n.$ Let $\varepsilon_n$ be a discrete random variable such that $X_1,X_2,\ldots ,X_n,\varepsilon_n$ are independent. If $n$ is odd, we suppose that $\varepsilon_n$ takes values $0,2,\ldots ,n-1$ with the following probabilities
\[ {\bf P}[\varepsilon_n = n-2k] =\frac{1}{2^{n-1}} {\binom{n}{k}},\quad k=0, \ldots ,(n-1)/2.  \]
In the case when $n$ is even $\varepsilon_n$ takes values $0,2,\ldots ,n-2$ with probabilities
\[ {\bf P}[\varepsilon_n = n] = \frac{1}{2^{n}} {\binom{n}{n/2}}+\frac{1}{2^{n-1}},\]
\[ {\bf P}[\varepsilon_n=n-2k]  =\frac{1}{2^{n-1}} {\binom{n}{k}},\quad k=1, \ldots ,n/2-1 .\]
Then the relation
\begin{equation}\label{eq2a}
\varepsilon_n X_1 \ \stackrel{d}{=} \ X_1+X_2+\ldots +X_n 
\end{equation}
holds if and only if $X$ takes only two values, $-a$ and $a$ for some $a > 0$, each with probability $\tfrac12$.  
\end{thm}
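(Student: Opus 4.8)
The plan is to convert \eqref{eq2a} into a functional equation for the characteristic function $g$ of $X$ and then run the intensively-monotone-operator argument exactly as in Theorem \ref{th1}. Since $X$ is symmetric, $g$ is real and even. The right-hand side of \eqref{eq2a} has characteristic function $g^n(t)$, while conditioning on $\varepsilon_n$ and using independence shows that the left-hand side has characteristic function $\sum_j \mathbf{P}[\varepsilon_n=v_j]\,g(v_j t)$. Hence \eqref{eq2a} is equivalent to
\[ g^n(t)=\frac{1}{2^{n-1}}\sum_{k=0}^{\lfloor (n-1)/2\rfloor}{\binom{n}{k}}\,g\bigl((n-2k)t\bigr)+C_n , \]
where $C_n=\tfrac1{2^{n}}{\binom{n}{n/2}}$ for even $n$ and $C_n=0$ for odd $n$. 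The first observation is that these weights are precisely the coefficients in the elementary power-reduction identity $\cos^n\theta=\tfrac1{2^{n-1}}\sum_{k=0}^{\lfloor(n-1)/2\rfloor}{\binom{n}{k}}\cos\bigl((n-2k)\theta\bigr)$ (together with the middle term when $n$ is even). Consequently $g(t)=\cos(at)$ solves the displayed equation for every $a\ge0$; this settles the `if' direction and singles out the family that any solution must belong to.

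For the converse I would follow the recipe of Theorem \ref{th1} and recast the equation as a fixed-point equation $g=\mathbf{A}g$. Isolating the top-frequency term $g(nt)$, whose coefficient is $2^{-(n-1)}$, and rescaling $t\mapsto t/n$ gives
\[ (\mathbf{A}g)(t)=2^{n-1}g^n(t/n)-\sum_{k=1}^{\lfloor(n-1)/2\rfloor}{\binom{n}{k}}\,g\Bigl(\tfrac{(n-2k)t}{n}\Bigr)-2^{n-1}C_n , \]
which for $n=2$ reduces exactly to the operator $2g^2(t/2)-1$ of Theorem \ref{th1}. On the set $\mathcal{E}_+$ of non-negative, zero-free, continuous functions on $[0,T]$ one then verifies that $\mathbf{A}$ is intensively monotone; granting this, the family $\{\cos(at):a>0\}$ is strongly $\mathcal{E}_+$-positive (Example 1.3.1 of \cite{KKM}) and consists of fixed points, so Theorem 1.1.1 of \cite{KKM} forces any solution $g\in\mathcal{E}_+$ to coincide with some $\cos(at)$. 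A short continuation argument, first matching $g$ with $\cos(a\,\cdot)$ on a neighbourhood of the origin where $g>0$ and then propagating the identity to all of $\R$ by the functional equation (or by analyticity), shows that $X$ is concentrated on $\{-a,a\}$ with equal masses, as claimed.

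The crux, and the step I expect to resist a quick `it is easy to check', is the intensive monotonicity of $\mathbf{A}$. For $n=2$ the operator carries no lower-order terms and is plainly monotone; but for $n\ge3$ the terms $-{\binom{n}{k}}g((n-2k)t/n)$ enter with negative coefficients, so $\mathbf{A}$ is not globally order-preserving and the verification cannot be merely formal. What rescues it is that every argument on the right, namely $t/n$ and the $(n-2k)t/n\le(n-2)t/n$, is a strict contraction of $t$: two competitors coinciding on $[0,x_0]$ have images coinciding on the strictly larger interval $[0,\tfrac{n}{n-2}x_0]$, so the coincidence set spreads and the first genuine discrepancy of $\mathbf{A}f$ and $\mathbf{A}g$ is produced by a single activated term. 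However, that term ($k=1$) has a negative coefficient, so $\mathbf{A}f-\mathbf{A}g$ may reverse sign relative to $f-g$, and controlling this sign is the heart of the matter. Here the restriction to $\mathcal{E}_+$ is essential, since it confines $g$ to $(0,1]$, the range on which the underlying multiple-angle nonlinearity is well behaved, and it is the specific choice of weights (those of the $\cos^n$ expansion) that makes the solution family and the monotonicity work simultaneously. Carrying out this verification, with the extra bookkeeping caused by the constant $C_n$ and by the placement of the middle binomial mass at the value $0$ in the even case, is where the real work of the proof lies.
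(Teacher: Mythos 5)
Your reduction coincides exactly with the paper's: the paper's entire proof of Theorem \ref{th1a} consists of rewriting \eqref{eq2a} as the characteristic-function equations \eqref{eq2b}--\eqref{eq2c} (your single displayed equation with the constant $C_n$; you also silently repaired the misprint in the even-$n$ probabilities, since the mass $\binom{n}{n/2}/2^{n}$ must sit at the value $0$, which is exactly why it enters as an additive constant), followed by the single sentence that both equations ``can be solved by applying the method of intensively monotone operators similarly to the proof of the previous Theorem \ref{th1}.'' Your operator $\mathbf{A}$, obtained by isolating $f(nt)$ and rescaling, is the natural implementation of that sentence and specializes to the paper's operator $2g^{2}(t/2)-1$ when $n=2$; so in approach you and the paper agree.

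The step you flag and leave open is, however, a genuine gap, and your own diagnosis shows that the paper's ``similarly'' is not a routine check. For $n=3$ the equation reads $f(3t)=4f^{3}(t)-3f(t)$, so $(\mathbf{A}g)(t)=T_{3}\bigl(g(t/3)\bigr)$ with $T_{3}(x)=4x^{3}-3x$, which is strictly decreasing on $[0,\tfrac12]$: the constants $g_{1}\equiv 0.4 \ge g_{2}\equiv 0.3$ lie in $\mathcal{E}_{+}$ and give $\mathbf{A}g_{1}\equiv -0.944 < \mathbf{A}g_{2}\equiv -0.792$, so $\mathbf{A}$ reverses pointwise order on $\mathcal{E}_{+}$ --- precisely the property ($2x^{2}-1$ increasing on $[0,\infty)$) that the $n=2$ verification rests on. Worse, the difficulty is not only with the operator but with the class: $\mathcal{E}_{+}$ as defined (positive continuous functions) is provably too large for the conclusion. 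For $n=2$ take $g(t)=\cos\bigl(t\,p(\log_{2}t)\bigr)$ for $t>0$, $g(0)=1$, with $p$ continuous, $1$-periodic, positive and of small amplitude (for $n=3$ use $\log_{3}$); then $g$ is a positive continuous fixed point of $\mathbf{A}$ that is not a cosine, so the hypotheses of Theorem 1.1.1 of \cite{KKM} cannot all be satisfied by $\mathbf{A}$, $\mathcal{E}_{+}$ and $\{\cos(at)\}$, and the theorem cannot be invoked off the shelf. This also shows your coincidence-spreading observation cannot rescue the argument by itself: it enlarges an existing interval of agreement with some $\cos(a\cdot)$ but cannot create the first one, and creating it is exactly what monotonicity plus strong positivity are for. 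Any correct completion must therefore inject information beyond positivity and continuity --- the positive definiteness of $f$, or a genuinely smaller class, or a different device such as the contraction metric of Theorem \ref{th3} (which, however, uses a finite-third-moment hypothesis that Theorem \ref{th1a} does not grant). Your proposal, exactly like the paper's proof, stops short at this point; the difference is that you say so.
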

\begin{proof} The relation (1.3) can be written equivalently in terms of the characteristic function $f$ of $X_1$: 
\begin{equation}\label{eq2b}
f^{n}(t)=\frac{1}{2^{n-1}} \sum_{k=0}^{(n-1)/2} {\binom{n}{k}}f((n-2k)t) \ \mbox{ for odd } \ n, 
\end{equation} 
\begin{equation}\label{eq2c}
f^{n}(t)=\frac{1}{2^{n-1}} \sum_{k=0}^{n/2-1} {\binom{n}{k}}f((n-2k)t)+\frac{1}{2^{n}}{\binom{n}{n/2}} \ \mbox{ for even } \ n.
\end{equation} 
Both equations (\ref{eq2b}) and (\ref{eq2c}) can be solved by applying the method of intensively monotone operators similarly to the 
proof of the previous Theorem \ref{th1}. 
\end{proof}

\end{section}

\section{Characterization by independence property}\label{sec2} 
\setcounter{equation}{0} 

\begin{thm}\label{th2}
Let $X_1, X_2$ be independent identically distributed random variables having a symmetric distribution. Let $\varepsilon$ be a random variable taking two values, $0$ and $1,$ each with probabilities $tfrac12$ and such that $X_1,X_2,\varepsilon$ are independent. Define two forms
\[ L_1 = \varepsilon X_1 +(1-\varepsilon) X_2 \quad \mbox{ and } \quad L_2 = \varepsilon X_1 -(1-\varepsilon)X_2.  \]
Then the forms $L_1$ and $L_2$ are independent if and only if $X_1$ takes only two values, $-a$ and $a,$ for some $a > 0$, each with probability $\tfrac12$. 
\end{thm}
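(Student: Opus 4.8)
The plan is to reduce the independence of $L_1$ and $L_2$ to a functional equation for the characteristic function $f$ of $X_1$, and then to recognize that equation, along its diagonal, as equation (\ref{eq2}) already treated in Theorem \ref{th1}. First I would condition on the two values of $\varepsilon$. On the event $\{\varepsilon=1\}$ the pair $(L_1,L_2)$ equals $(X_1,X_1)$, while on $\{\varepsilon=0\}$ it equals $(X_2,-X_2)$; hence the joint characteristic function is
\[
\E\bigl[e^{\,i(sL_1+tL_2)}\bigr]=\tfrac12 f(s+t)+\tfrac12 f(s-t).
\]
Putting $t=0$, and then $s=0$ while using that $f$ is even (by symmetry of $X$), shows that both marginal characteristic functions equal $f$. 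Thus $L_1$ and $L_2$ are independent if and only if
\[
\tfrac12 f(s+t)+\tfrac12 f(s-t)=f(s)f(t)\qquad\text{for all }s,t\in\R,
\]
which is the classical d'Alembert (cosine) functional equation.

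Next I would exploit the diagonal $s=t$ of this identity. Setting $s=t$ gives $\tfrac12 f(2t)+\tfrac12=f^2(t)$, and replacing $t$ by $t/2$ yields exactly
\[
\tfrac12\bigl(f(t)+1\bigr)=f^2(t/2),
\]
that is, equation (\ref{eq2}) with $g=f$. Therefore the conclusion of Theorem \ref{th1} --- obtained there through the method of intensively monotone operators, with the strongly $\mathcal{E}_+$-positive family $\{\cos(at),\,a>0\}$ --- applies verbatim and forces $f(t)=\cos(at)$ for some $a>0$; that is, $X$ takes the two values $-a$ and $a$, each with probability $\tfrac12$.

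For the converse I would simply verify that for such a two-point $X$ the characteristic function $f(t)=\cos(at)$ satisfies the full d'Alembert equation, by the product-to-sum identity $\cos(a(s+t))+\cos(a(s-t))=2\cos(as)\cos(at)$, so that the joint characteristic function factors and independence holds. The one point demanding care is that passing to the diagonal yields only a necessary condition; the main obstacle is thus to confirm that this necessary condition already determines $f$ completely, which is precisely what the appeal to Theorem \ref{th1} delivers, the converse then guaranteeing that no information is lost off the diagonal.
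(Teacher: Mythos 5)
Your proposal is correct and follows essentially the same route as the paper: compute the joint characteristic function $\tfrac12\bigl(f(s+t)+f(s-t)\bigr)$, express independence as the d'Alembert equation $\tfrac12\bigl(f(s+t)+f(s-t)\bigr)=f(s)f(t)$, specialize to the diagonal $s=t$ to recover equation (\ref{eq2}), and invoke Theorem \ref{th1}. Your added remarks --- verifying that both marginals have characteristic function $f$, and noting that the converse verification of $\cos(at)$ ensures nothing is lost by passing to the diagonal --- are careful touches the paper leaves implicit, but the argument is the same.
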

\begin{proof}
Let us calculate the joint characteristic function of  $L_1$ and $L_2$. We have
\[ 
{\bf E}[\exp\{i s L_1 + i t L_2\} = {\bf E}[\exp\{i(s+t) \varepsilon X_1 + i(s-t) (1-\varepsilon) X_2\}]
\]
\[
 = \tfrac12(f(s+t) + f(s-t).
\]
The independence property of $L_1$ and $L_2$ is equivalent to the relation 
\begin{equation}\label{eq3}
\tfrac12(f(s+t) + f(s-t)) = f(s)f(t) \ \mbox{ for all } \ s, t.
\end{equation}

It is easy to see that the function $f(t)=\cos(at)$ is a solution of (\ref{eq3}). To see that there are no other solutions, 
 let us put $t=s$ into (\ref{eq3}). We obtain
\[ \tfrac12(f(2s)+1)= f^2(s), \]
what coincides with (\ref{eq2}). The result follows now from Theorem \ref{th1}.
\end{proof}

\section{Characterization of the uniform distribution}\label{sec3}
\setcounter{equation}{0}

Here we give a characterization of the uniform distribution by the property of identical distributions of linear forms with random coefficients.
 
\begin{thm}\label{th3}
Let $X_1,X_2,X_3$ be independent and identically distributed random variables, 
copies of $X$, where $X$ is a non-degenerate random variable symmetric with respect to the origin $0$ and finite 
absolute third moment. Let further, $\xi$ be a random variable taking only two values, $1/2$ and $1,$ each with probability $\tfrac12.$  The random variables $X_,X_2,X_3,\xi$ are assumed to be independent. The following linear  
\[ L_1=\xi X_1+\tfrac{1}{4}(X_2+X_3) \quad \text{ and } \quad L_2=\tfrac12(X_1+X_2+X_3)\]
are identically distributed if and only if $X$ has a uniform distribution on a symmetric interval $(-A,A)$ for some $A>0$.
\end{thm}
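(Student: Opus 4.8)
The plan is to pass to characteristic functions and, exactly as in the proof of Theorem \ref{th2}, to reduce the distributional identity $L_1\stackrel{d}{=}L_2$ to the cosine equation (\ref{eq2}) already solved in Theorem \ref{th1}. Let $f$ denote the characteristic function of $X$, which is real and even by symmetry. Conditioning on $\xi$ and using the independence of $X_1,X_2,X_3,\xi$, I would compute
\[
\E[e^{itL_1}]=\tfrac12 f(t/4)^2\bigl(f(t/2)+f(t)\bigr),\qquad \E[e^{itL_2}]=f(t/2)^3 ,
\]
so that the identical distribution of $L_1$ and $L_2$ is equivalent to
\[
\tfrac12 f(t/4)^2\bigl(f(t/2)+f(t)\bigr)=f(t/2)^3\qquad\text{for all }t .
\]
That $f(t)=\sin(At)/(At)$ solves this is a routine check, most cleanly via the Vi\`ete product $\prod_{k\ge1}\cos(At/2^{k})=\sin(At)/(At)$; the real content is the converse.

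Since $f$ is continuous with $f(0)=1$, it is positive on some interval $[-T,T]$. On this interval I would divide the equation by $f(t/4)^2 f(t/2)$ and introduce the multiplicative increment
\[
q(t):=\frac{f(2t)}{f(t)} ,
\]
which is continuous, strictly positive, and equal to $1$ at the origin. Using $f(t)/f(t/2)=q(t/2)$ and $f(t/2)/f(t/4)=q(t/4)$, the equation collapses to
\[
\tfrac12\bigl(1+q(s)\bigr)=q(s/2)^2 ,\qquad s=t/2 ,
\]
which is precisely equation (\ref{eq2}). Hence $q$ satisfies the cosine equation, and the intensively monotone operator argument of Theorem \ref{th1} — the operator $(\mathbf{A}g)(t)=2g^2(t/2)-1$ together with the strong $\mathcal{E}_+$-positivity of the family $\{\cos(at)\}$ from \cite{KKM} — forces $q(s)=\cos(as)$ for some $a\ge0$. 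The non-degeneracy of $X$ rules out $a=0$, since $q\equiv1$ would give $f(2t)=f(t)$ and hence $f\equiv1$; thus $a>0$. To recover $f$ itself I would iterate $f(t)=f(t/2)\,q(t/2)=f(t/2)\cos(at/2)$ and let the argument tend to $0$, obtaining
\[
f(t)=\prod_{k\ge1}\cos\!\bigl(at/2^{k}\bigr)=\frac{\sin(at)}{at} ,
\]
the characteristic function of the uniform law on $(-a,a)$, so $A=a$ and the theorem follows.

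The main obstacle is not the algebra but two points of rigour in this reduction. First, $q$ is a priori only a continuous positive function and not a characteristic function, so Theorem \ref{th1} cannot be quoted as a statement about distributions; one must instead check that $q\in\mathcal{E}_+$ near the origin and invoke the underlying uniqueness theorem for $\mathcal{E}_+$ (Theorem 1.1.1 of \cite{KKM}) directly. Second, the conclusion $q=\cos(a\,\cdot)$ is first obtained only on a zero-free neighbourhood of the origin and must be propagated to the whole line; this I would do by marching outward with $f(t)=2f(t/2)^3/f(t/4)^2-f(t/2)$, which determines $f$ at $t$ from strictly smaller arguments, using continuity to pass across the isolated zeros of the sinc function, where this formula is of the indeterminate form $0/0$. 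The finite third absolute moment enters only as a regularity hypothesis guaranteeing the smoothness of $f$ (hence of $q$) invoked in this framework; the reduction above suggests it is nearly inessential on this route, and I would expect a careful write-up to use it only lightly.
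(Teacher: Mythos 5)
Your reduction to the cosine equation is correct and coincides with the paper's: the paper works with $\mathcal{K}(t)=f(t)/f(t/2)$, which is your $q$ up to a rescaling of the argument, and arrives at exactly the same equation (\ref{eq5}). The genuine gap is in how you then solve it. You propose to treat $q$ as an element of $\mathcal{E}_+$ (positive continuous functions near the origin) and invoke Theorem 1.1.1 of \cite{KKM} directly. That step is unrecoverable, because within that class the equation $q(s)=2q^2(s/2)-1$ does \emph{not} have only cosine solutions. Take any continuous bounded function $\psi$ with $\psi(s)=\psi(s/2)$, for instance $\psi(s)=1+\delta\sin(2\pi\log_2 s)$ with small $\delta>0$, and set $q(s)=\cos\bigl(s\psi(s)\bigr)$ for $s\in(0,T]$, $q(0)=1$. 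Then $q$ is continuous and strictly positive on $[0,T]$ for $T$ small, and $2q^2(s/2)-1=\cos\bigl(s\psi(s/2)\bigr)=\cos\bigl(s\psi(s)\bigr)=q(s)$, yet $q(s)$ is not $\cos(as)$ for any $a$. So the family $\{\cos(at)\}$ is not strongly positive with respect to this class, and no uniqueness theorem can operate on continuity and positivity alone; whatever class Example 1.3.1 of \cite{KKM} treats must carry more structure, and in Theorem \ref{th1} the machinery is indeed applied to a function that \emph{is} a characteristic function. Your $q$ is a ratio of characteristic functions and is not known to be one --- which is precisely the obstruction the paper itself flags (``we cannot apply the statement of Theorem \ref{th1} because $\mathcal{K}(t)$ is not supposed to be a characteristic function'') before switching to a different method.

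What the paper does instead, and what your plan is missing, is an argument that uses the moment hypothesis essentially. Finite third absolute moment and non-degeneracy give $f\in C^3$, $f''(0)<0$, hence $\mathcal{K}\in C^3$ with $\mathcal{K}(0)=1$, $\mathcal{K}'(0)=0$, $\mathcal{K}''(0)=\tfrac34 f''(0)<0$; after rescaling one may assume $\mathcal{K}''(0)=-1$. On the complete metric space $\mathfrak{F}$ of even $C^3$ functions on $[-T,T]$ with $g(0)=1$, $g''(0)=-1$, $|g|\le 1$, equipped with $d(g_1,g_2)=\sup_t\bigl|\bigl(g_1(t)-g_2(t)\bigr)/t^3\bigr|$, the operator $(\mathcal{B}g)(t)=2g^2(t/2)-1$ maps $\mathfrak{F}$ into itself and is a $\tfrac12$-contraction, so its unique fixed point is $\cos t$. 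It is exactly the pinned second derivative at the origin that kills the counterexamples above: $\cos\bigl(s\psi(s)\bigr)$ with oscillating $\psi$ has no second derivative at $0$. Consequently your closing judgement that the third-moment hypothesis is ``nearly inessential'' is backwards: quantitative regularity of $q$ at the origin is the whole content of the uniqueness step (it could perhaps be weakened --- e.g.\ a two-term expansion $1-q(s)\sim a^2s^2/2$ already forces, via $h=\arccos q$ and $h(s)=2h(s/2)$, that $h(s)/s\equiv a$ --- but it cannot be dropped). Your second point of rigour, marching the identity $f(t)=\sin(at)/(at)$ outward from a neighbourhood of $0$ via the functional equation, is sound and is in fact a step the paper glosses over; it does not, however, repair the central gap.
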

\begin{proof}
The  identity of the distributions of $L_1$ and $L_2$ is expressed in terms of the characteristic functions $f$ of $X$ as follows: 
\begin{equation}\label{eq4}
\frac{1}{2}\Bigl(f(t)+f(t/2)\Bigr) f^2(t/4) = f^3(t/2),
\end{equation}
It is easy to see that the function $g(t)= \sin(At)/(At)$ is a solution of (\ref{eq4}) for any $A>0$. To finish the proof it is sufficient to show that the equation (\ref{eq4}) has no other solutions. 

We introduce the function 
\[ {\mathcal{K}}(t)=\frac{f(t)}{f(t/2)},\]
which is well-defined in a neighborhood of the origin $0$ and that ${\mathcal{K}}(t)$ satisfies the equation
\begin{equation}\label{eq5}
{\mathcal{K}}(t)=2 {\mathcal{K}}^2(t/2)-1 \ \mbox{ for all } \ t. 
\end{equation}
Notice that the equation (\ref{eq5}) coincides with (\ref{eq2}). However, we cannot apply the statement of Theorem \ref{th1} because ${\mathcal{K}}(t)$ is not supposed to be a characteristic function. Therefore, we need to propose another method of solving (\ref{eq5}). 

The properties of the random variable $X$ guarantee that its characteristic function $f(t)$ is real-valued and symmetric 
with respect to $0$, and has at least three continuous derivatives. We easily see  that the function ${\mathcal K}(t)$ possesses the same properties.
Therefore,  ${\mathcal K}(0)=1$ and ${\mathcal K}^{\prime}(0)=0$. It is also obvious that if ${\mathcal K}(t)$ is a solution of 
 (\ref{eq5}), then ${\mathcal K}(at)$ is also a solution of this equation for any $a>0$.
However, ${\mathcal K}^{\prime \prime}(0)= \frac{3}{4}f^{\prime \prime}(0) <0,$ hence  we can choose $a>0$ such that that 
${\mathcal K}^{\prime \prime}(0)=-1$. 

Introduce a set $\go F$ of three times continuously differentiable symmetric functions, defined on $[-T,T]$  for some $T>0$ sufficiently small,  and such that for any $g \in {\go F}$ the following properties hold: $g(0)=1$, $g^{\prime \prime}(0)=-1$ and $|g(t)| \leq 1$ for all $|t| \leq T$. Define
\begin{equation}\label{eq6}
d(g_1,g_2) = \sup_{t}\Bigl|\frac{g_1(t)-g_2(t)}{t^3}\Bigr|, \quad g_1,g_2 \in {\go F}.
\end{equation}
It is easy to check that $d(g_1,g_2)$ is a distance in $\go F$, and moreover, that $\go F$ is a complete metric space with this distance. Let us consider operator $\mathcal{B}$ defined as
\[ ({\mathcal{B}}g)(t) = 2 g^2(t/2)-1, \ \mbox{ for all } t.  \]
We can easily verify that $\mathcal{B}$ maps $\go F$ into $\go F$. For arbitrary $g_1,g_2 \in {\go F}$ we have
\[ d({\mathcal{B}}g_1,{\mathcal{B}}g_2) = \sup_{t}\Bigl| 2\cdot\frac{g_1^2(t/2)-g_2^2(t/2)}{t^3}\Bigr|= \sup_{\tau}\Bigl| \frac{1}{4}\cdot\frac{g_1^2(\tau)-g_2^2(\tau)}{\tau^3}\Bigr| \leq \]
\[\leq \frac{1}{2}\cdot \sup_{\tau} \Bigl|\frac{g_1(\tau)-g_2(\tau)}{\tau^3} \Bigr| = \frac{1}{2}\cdot d(g_1,g_2).\]
These relations  show that the operator $\mathcal{B}$ is a contraction and, therefore, it has a unique fixed point in $\go F$. 
Consequently, $\mathcal{K}(t) = \cos (t)$. 

Let us remind that we have assumed that  $\mathcal{K}^{\prime \prime }(0) =-1$. The general solution is $\cos (a t)$ for any $a>0$.
The function $\mathcal{K}(t)$ was defined as $ \mathcal{K}(t) = f(t)/f(t/2)$, so that
\[f(t) =\cos(at) f(t/2) = \]
\[
 = \cos (at) \cos(at/2) f(t/4) = \ldots = \prod_{k=0}^{\infty} \cos (at/2^k) = \frac{\sin(2at)}{2at},
\]
because $f$ is continuous at zero and $f(0)=1$.
\end{proof}

\section{One more characterization}\label{sec4}
\setcounter{equation}{0}

In  Section \ref{sec3} we have given a characterization of the uniform distribution with the characteristic function 
(in normalized form) $f(t)=\sin (t)/t$. Here we provide a characterization of another distribution whose characteristic function 
is $f(t) = t/\sinh(t)$. The latter corresponds to a random variable $Y$ on $\mathbb R$ with a density function  
\begin{equation}\label{eq8}
p(x)=\frac{\pi}{4\cosh^2(\pi x/2)}, \ x \in {\mathbb R.}
\end{equation}

\begin{thm}\label{th4}
Let $X_1,X_2,X_3$ be independent and identically distributed random variables, 
copies of $X$, where $X$ is a non-degenerate random variable, symmetric with respect to the origin, and
 with finite absolute third moment. Let further, $\xi$ be a random variable taking only two values, 
 $1/2$ and $1,$ each with probability $\tfrac12.$ Suppose also that the random variables $X_,X_2,X_3,\delta$ are independent. 
Then the following linear forms 
\[ L_1 =\xi X_1 + \tfrac12(X_2+X_3) \quad \text{ and } \quad L_2 = X_1 +\tfrac{1}{4}(X_2+X_3) \]
are identically distributed if and only if the density function of $X$ is of the form $p(x/a)/a, \ x \in {\mathbb R}$, $a>0$, where $p(x)$ is 
defined by (\ref{eq8}). 
\end{thm}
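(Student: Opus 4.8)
The plan is to follow the route of Theorem \ref{th3}. First I would translate the equality of distributions $L_1 \stackrel{d}{=} L_2$ into an equation for the characteristic function $f$ of $X$. Computing the two characteristic functions by conditioning on $\xi$ gives
\[
\tfrac12\bigl(f(t)+f(t/2)\bigr)f^2(t/2) = f(t)\,f^2(t/4) \quad \text{for all } t.
\]
A direct check shows that $f(t)=t/\sinh t$, and more generally $f(t)=at/\sinh(at)$ for $a>0$, solves this: after clearing denominators the identity collapses to $2\sinh^2(t/4)\bigl(1+\cosh(t/2)\bigr)=\sinh^2(t/2)$, which follows from $1+\cosh(t/2)=2\cosh^2(t/4)$ together with $\sinh(t/2)=2\sinh(t/4)\cosh(t/4)$. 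This settles the ``if'' direction, so the substance is uniqueness.

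For uniqueness I would again pass to the auxiliary function $\mathcal{K}(t)=f(t)/f(t/2)$, which is well defined, symmetric, and three times continuously differentiable in a neighborhood of $0$ where $f$ does not vanish. Dividing the displayed equation by $f(t/2)f^2(t/4)$ turns it into
\[
\mathcal{K}(t)=\frac{\mathcal{K}^2(t/2)}{2-\mathcal{K}^2(t/2)}.
\]
As in Theorem \ref{th3} one has $\mathcal{K}(0)=1$, $\mathcal{K}^{\prime}(0)=0$ and $\mathcal{K}^{\prime\prime}(0)=\tfrac34 f^{\prime\prime}(0)<0$; since $\mathcal{K}(at)$ solves the same equation for every $a>0$, I may normalize $\mathcal{K}^{\prime\prime}(0)=-\tfrac14$. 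The target solution is then $\mathcal{K}(t)=1/\cosh(t/2)$, which satisfies the equation because $2\cosh^2(t/4)-1=\cosh(t/2)$.

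To show this is the only solution I would set up a Banach fixed-point argument. Let $\go F$ be the set of three times continuously differentiable symmetric functions on $[-T,T]$ (with $T>0$ small) satisfying $g(0)=1$, $g^{\prime\prime}(0)=-\tfrac14$ and $|g(t)|\le 1$, equipped with the complete metric $d(g_1,g_2)=\sup_t |g_1(t)-g_2(t)|/t^3$, and consider the operator $(\mathcal{B}g)(t)=g^2(t/2)/\bigl(2-g^2(t/2)\bigr)$. The constraint $|g|\le1$ keeps the denominator in $[1,2]$, so $\mathcal{B}g$ is well defined and takes values in $[0,1]$, and a Taylor expansion at $0$ shows $\mathcal{B}$ preserves $g(0)=1$ and $g^{\prime\prime}(0)=-\tfrac14$; hence $\mathcal{B}:\go F\to\go F$. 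For the contraction estimate I would use the identity $\frac{u}{2-u}-\frac{v}{2-v}=\frac{2(u-v)}{(2-u)(2-v)}$ with $u=g_1^2(t/2)$, $v=g_2^2(t/2)$, together with $|g_1+g_2|\le2$ and $(2-u)(2-v)\ge1$; after the substitution $\tau=t/2$ (which contributes $t^3=8\tau^3$) this yields $d(\mathcal{B}g_1,\mathcal{B}g_2)\le\tfrac12\,d(g_1,g_2)$. Thus $\mathcal{B}$ has a unique fixed point, necessarily $\mathcal{K}(t)=1/\cosh(t/2)$.

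Finally I would reconstruct $f$ exactly as in Theorem \ref{th3}. Undoing the normalization gives $\mathcal{K}(t)=1/\cosh(bt)$ for some $b>0$, so $f(t)=f(t/2)/\cosh(bt)$; iterating and using $f(0)=1$ with continuity at $0$ gives $f(t)=\prod_{k=0}^{\infty}1/\cosh(bt/2^k)$. The product formula $\prod_{k=1}^{\infty}\cosh(x/2^k)=\sinh(x)/x$ gives $\prod_{k=0}^{\infty}\cosh(bt/2^k)=\sinh(2bt)/(2bt)$, so $f(t)=2bt/\sinh(2bt)=at/\sinh(at)$ with $a=2b$, which is the characteristic function of the density $p(x/a)/a$. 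The only genuinely new ingredient compared with Theorem \ref{th3} is the rational rather than quadratic operator $\mathcal{B}$, and the single point needing care is that its denominator must stay bounded away from $0$; this is exactly what the restriction $|g|\le1$ on $\go F$ guarantees, and it is also what keeps the contraction constant at $\tfrac12$. Verifying that the actual $\mathcal{K}$ lies in $\go F$ — positivity of $f$ and the bound $|\mathcal{K}|\le1$ — is a purely local statement near the origin, handled by shrinking $T$.
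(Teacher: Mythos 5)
Your proposal is correct and follows exactly the route the paper itself prescribes: the paper omits the proof of Theorem \ref{th4} on the grounds that it mirrors Theorem \ref{th3}, and your adaptation --- the characteristic-function equation $\tfrac12(f(t)+f(t/2))f^2(t/2)=f(t)f^2(t/4)$, the auxiliary function $\mathcal{K}(t)=f(t)/f(t/2)$ with normalization $\mathcal{K}''(0)=-\tfrac14$, the contraction $(\mathcal{B}g)(t)=g^2(t/2)/\bigl(2-g^2(t/2)\bigr)$ on $\go F$, and the infinite-product reconstruction $f(t)=at/\sinh(at)$ --- is precisely that argument carried out in full. The one genuinely new point relative to Theorem \ref{th3}, keeping the rational operator's denominator bounded away from zero via the constraint $|g|\le 1$, you handle correctly, and it yields the same contraction constant $\tfrac12$ as in the paper.
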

The proof of Theorem \ref{th4} is very similar to that of the previous Theorem \ref{th3} and, therefore, is omitted.

\section*{Acknowledgment}

The work by Lev Klebanov was partially supported by Grant GA\v{C}R 19-04412S.

\end{document}